\documentclass[11pt]{amsart}

\usepackage[utf8]{inputenc}
\usepackage[T1]{fontenc}
\usepackage[english]{babel}
\usepackage{amsmath,amssymb,amsthm}
\usepackage{mathtools}
\usepackage[shortlabels]{enumitem}
\usepackage[hidelinks]{hyperref}

\newtheorem{theorem}{Theorem}[section]
\newtheorem{proposition}[theorem]{Proposition}
\newtheorem{lemma}[theorem]{Lemma}
\newtheorem{corollary}[theorem]{Corollary}
\newtheorem{maintheorem}{Theorem}

\theoremstyle{definition}

\newtheorem{example}[theorem]{Example}

\theoremstyle{remark}
\newtheorem{remark}[theorem]{Remark}

\numberwithin{equation}{section}

\newcommand{\R}{\mathbb{R}}
\newcommand{\C}{\mathbb{C}}
\newcommand{\K}{\mathbb{K}}
\newcommand{\re}{\operatorname{Re}}
\newcommand{\ip}[2]{\langle #1, #2 \rangle}
\newcommand{\conj}[1]{\overline{#1}}
\newcommand{\dbar}{\bar\partial}
\newcommand{\din}{d_{\mathrm{in}}}
\newcommand{\dout}{d_{\mathrm{out}}}
\DeclareMathOperator{\Sym}{Sym}

\begin{document}

\title[Inner and outer isometries of graphs of polynomials]{Inner and outer isometries\\ of graphs of polynomials}

\author{Alexandre Fernandes}
\address{Departamento de Matem\'atica, Universidade Federal do Cear\'a, Fortaleza, CE, Brazil}
\email{alex@mat.ufc.br}

\author{Claudia Fernandes}
\address{Centro de Ci\^encias e Tecnologia, Universidade Estadual do Cear\'a, Fortaleza, CE, Brazil}
\email{claudia.reboucas@uece.br}

\author{Edson Sampaio}
\address{Departamento de Matem\'atica, Universidade Federal do Cear\'a, Fortaleza, CE, Brazil}
\email{edsonsampaio@mat.ufc.br}

\subjclass[2020]{Primary 53C24; Secondary 53A05, 30C35}

\keywords{Graphs of polynomials, inner metric, outer metric, isometric rigidity, conformal metrics}
\thanks{The first named author was partially supported by CNPq-Brazil grant  304700/2021-5 and 405618/2023-9. The second named author was partially supported by CNPq-Brazil grants 303375/2025-6, 404611/2025-7 and 405618/2023-9 and by the Serrapilheira Institute (grant number Serra -- R-2110-39576).
}

\date{\today}

\begin{abstract}
We classify graphs of complex polynomials in one variable up to
isometry, both for the inner (length) distance and for the outer
(Euclidean) distance of $\C^2\cong\R^4$. For polynomials $p,q$ with
$\deg p\ge 2$ we show that the graphs of $p$ and $q$ are inner
isometric if and only if they are outer isometric, if and only if
$p(z)=\alpha\,q(\beta z+h)+k$ or $p(z)=\alpha\,\bar q(\beta z+h)+k$ for
some $\alpha,\beta,h,k\in\C$ with $|\alpha|=|\beta|=1$, where $\bar q$
denotes the polynomial with conjugate coefficients. Moreover, every inner
isometry between such graphs is the restriction of an isometry of $\C^2$;
in this sense graphs of complex polynomials are isometrically rigid. We
contrast this with the real case, where the two distances behave very
differently: the graphs of any two real polynomials are inner isometric,
whereas, for degree at least $2$, outer isometries are generated by
translations, reflections in horizontal and vertical lines and
half-turns.
\end{abstract}

\maketitle

\section{Introduction}

Let $\K=\R$ or $\C$. The graph of a polynomial $p\in\K[t]$ is the set
\[
  G_p=\{(t,p(t)) : t\in\K\}\subset\K^2,
\]
which we regard as a subset of the Euclidean space $\R^2$ or
$\C^2\cong\R^4$. Following the usual terminology of Lipschitz geometry
(see e.g.~\cite{BM}), a subset $X$ of a Euclidean space carries two
natural metrics: the \emph{outer distance} $\dout$, which is the
restriction of the Euclidean distance, and the \emph{inner distance}
$\din$, where $\din(u,v)$ is the infimum of the lengths of curves in $X$
joining $u$ to $v$. We say that $G_p$ and $G_q$ are \emph{inner} (resp.\
\emph{outer}) \emph{isometric} if there is a bijection $G_p\to G_q$
preserving the inner (resp.\ outer) distances.

The question that motivated this paper was asked by Edson Sampaio during
the PhD qualifying examination of his student Henrique Blanco: \emph{when
are the graphs of two polynomials isometric?} Some answers are
immediate. Isometries of the ambient space such as translations, the
maps $(z,w)\mapsto(\beta z,\bar\alpha w)$ with $|\alpha|=|\beta|=1$, and
complex conjugation $(z,w)\mapsto(\bar z,\bar w)$ carry graphs of
polynomials onto graphs of polynomials. Our main result says that, in the
complex case and for degree at least $2$, these are the only
possibilities, for either of the two distances.

\begin{maintheorem}\label{thm:A}
Let $p,q\in\C[z]$ with $\deg p\ge 2$. The following conditions are
equivalent:
\begin{enumerate}[(a)]
\item $G_p$ and $G_q$ are inner isometric;
\item $G_p$ and $G_q$ are outer isometric;
\item there exist $\alpha,\beta,h,k\in\C$ with $|\alpha|=|\beta|=1$ such
  that
  \[
    \begin{aligned}
      &p(z)=\alpha\,q(\beta z+h)+k &&\text{for all } z\in\C,\ \text{or}\\
      &p(z)=\alpha\,\bar q(\beta z+h)+k &&\text{for all } z\in\C.
    \end{aligned}
  \]
\end{enumerate}
Moreover, every inner isometry $\Gamma\colon G_p\to G_q$ is the
restriction of an isometry of $\C^2$; precisely, $\Gamma$ is the
restriction of
\[
  (z,w)\mapsto\bigl(\beta z+h,\ \bar\alpha(w-k)\bigr)
  \qquad\text{or}\qquad
  (z,w)\mapsto\bigl(\conj{\beta z+h},\ \conj{\bar\alpha(w-k)}\bigr),
\]
with $\alpha,\beta,h,k$ as in \textup{(c)}.
\end{maintheorem}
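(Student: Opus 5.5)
The implications (c)$\Rightarrow$(b)$\Rightarrow$(a) are the easy ones. For (c)$\Rightarrow$(b), the displayed maps $(z,w)\mapsto(\beta z+h,\bar\alpha(w-k))$ and their conjugates are isometries of $\C^2$. A direct check shows that they carry $G_p$ onto $G_q$ exactly when (c) holds. For (b)$\Rightarrow$(a), an outer isometry preserves lengths of curves, since length is defined through the ambient distance, so it is also an inner isometry.

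The real content is therefore (a)$\Rightarrow$(c), together with the rigidity statement. I would work intrinsically. Parametrize $G_p$ by $z\mapsto(z,p(z))$. The inner metric is then the Riemannian metric
\[
 g_p=(1+|p'(z)|^2)\,|dz|^2,
\]
which is conformal on $\C$, with conformal factor $\lambda_p=1+|p'|^2$. An inner isometry $\Gamma\colon G_p\to G_q$ is a distance-preserving bijection of the length spaces $(\C,d_{g_p})\to(\C,d_{g_q})$. By Myers--Steenrod, $\Gamma$ is then a smooth Riemannian isometry $\phi\colon\C\to\C$. Because both metrics are conformal to $|dz|^2$, $\phi$ is conformal or anticonformal with respect to the flat structure. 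Composing with conjugation, which replaces $q$ by $\bar q$, I may assume $\phi$ is holomorphic. It is a biholomorphism of $\C$, hence affine: $\phi(z)=az+b$. The isometry condition reads
\[
 1+|p'(z)|^2=|a|^2\bigl(1+|q'(az+b)|^2\bigr).
\]

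Next I extract $|a|=1$ and $p'=\gamma\,q'(\phi)$ from this identity. Since $\deg p\ge 2$, $p'$ is a nonconstant polynomial and vanishes somewhere. The identity then forces $q'$ to be nonconstant as well, and comparing growth at infinity gives $\deg p=\deg q$. To compare the two sides, set $u=p'$ and $v=a\,q'\circ\phi$, both holomorphic. The identity becomes $|u|^2-|v|^2=|a|^2-1$, a constant.

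\begin{itemize}
\item Applying $\partial\dbar$ gives $|u'|^2=|v'|^2$. Hence $v'=\gamma u'$ with $|\gamma|=1$ on the complement of the zeros, and by continuity everywhere, so $v=\gamma u+c$.
\item Substituting back gives $|u|^2-|\gamma u+c|^2=-2\re(\bar\gamma\bar c\,u)-|c|^2$. This must be constant. Since $u$ is a nonconstant polynomial, it is an open map, so $c=0$, and then $|a|=1$.
\end{itemize}

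Thus $a=\beta$ with $|\beta|=1$ and $p'(z)=\gamma\beta\, q'(\beta z+b)$. Integrating, $p(z)=\alpha q(\beta z+h)+k$ with $\alpha=\gamma$, $h=b$, for some constant $k$. This is (c). The induced map is $(z,p(z))\mapsto(\beta z+h,q(\beta z+h))=(\beta z+h,\bar\alpha(p(z)-k))$, which is the restriction claimed in the rigidity statement. The anticonformal case yields the conjugate form in the same way.

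The main obstacle is the regularity step. One must justify that a merely metric inner isometry is a smooth Riemannian isometry of the parametrizing surface. The inner metric on $G_p$ coincides with the length metric of the smooth complete surface $(\C,g_p)$, so Myers--Steenrod (or Calabi--Hartman) applies. The only point needing care is that the graph's induced length structure really equals $d_{g_p}$. This holds because $G_p$ is an embedded smooth submanifold and curves in it correspond to curves in $\C$.
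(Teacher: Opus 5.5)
Your proof is correct and has the same skeleton as the paper's: Myers--Steenrod, then the fact that an isometry between conformal metrics on all of $\C$ is affine or anti-affine, then an analysis of $|p'|^2-|a\,q'(az+b)|^2=|a|^2-1$. Several steps are carried out differently.

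For the modulus identity, the paper (Lemma~\ref{lem:moduli}) compares coefficients in $\C[z,\bar z]$. That needs Lemma~\ref{lem:zzbar} but no function theory. You instead:
\begin{enumerate}[(1)]
\item apply $\partial\dbar$ to get $|u'|=|v'|$;
\item use that a unimodular holomorphic function on $\C$ minus the finitely many zeros of $u'=p''$ is constant, which is where $p''\not\equiv 0$, i.e.\ $\deg p\ge 2$, enters;
\item use the open mapping theorem to force $c=0$.
\end{enumerate}
Your version is more conceptual and works verbatim for entire $u,v$. The paper's is purely algebraic and elementary.

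For the anticonformal case, you compose $\Gamma$ with $(z,w)\mapsto(\bar z,\bar w)$, which carries $G_q$ onto $G_{\bar q}$ and makes $\phi$ holomorphic. This gives $p=\alpha\,\bar q(\beta z+h)+k$ and the conjugated formula for $\Gamma$ in one stroke. The paper instead redoes the computation with $\varphi(z)=a\bar z+b$.

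For (b)$\Rightarrow$(a), you use that metric length (supremum over partitions) is determined by the distance, rather than extending $\Gamma$ to an ambient isometry via Lemma~\ref{lem:extension}. This is fine. However, the paper defines $\din$ through curves with piecewise $C^1$ projection, so you should say that both notions of length yield the same inner distance.

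The global dichotomy ``conformal or anticonformal'' also needs a remark you leave implicit: $\det d\phi$ never vanishes, so it has constant sign on the connected set $\C$.

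There are two harmless slips. The cross term is $-2\re(\gamma\bar c\,u)$. And $v=\gamma u$ gives $p'(z)=\bar\gamma\,\beta\,q'(\beta z+b)$, so $\alpha=\bar\gamma$.
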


Thus graphs of complex polynomials of degree at least $2$ are
\emph{isometrically rigid}: they admit no inner isometries other than
those induced by the ambient space, and the inner and outer
classifications coincide. The graph of a polynomial of degree at most $1$
is a real affine $2$-plane, and any two such planes are isometric for
both distances (Remark~\ref{rem:degree-one}).

The proof of the implication (a)$\Rightarrow$(c) has two steps. First,
the inner metric of $G_p$ is the Riemannian metric
$(1+|p'(z)|^2)\,|dz|^2$ on $\C$, which is conformal; by the
Myers--Steenrod theorem and an elementary argument with Wirtinger
derivatives, an isometry between two conformal metrics on the whole
plane $\C$ is affine or anti-affine (Proposition~\ref{prop:conformal}).
Second, the resulting identity
$|a|^2(1+|q'(az+b)|^2)=1+|p'(z)|^2$ is analysed by comparing
coefficients in $\C[z,\bar z]$ (Lemma~\ref{lem:moduli}). The implication
(b)$\Rightarrow$(a) holds because outer isometries extend to isometries
of the ambient space (Lemma~\ref{lem:extension}).

In the real case the two distances behave very differently. A curve has
no intrinsic geometry, and the graph of any real polynomial,
parametrized by arc length, is inner isometric to the real line. On the
other hand, the outer classification is again rigid.

\begin{maintheorem}\label{thm:B}
Let $p,q\in\R[x]$.
\begin{enumerate}[(a)]
\item The graphs $G_p$ and $G_q$ are always inner isometric.
\item If $\deg p\ge 2$, then $G_p$ and $G_q$ are outer isometric if and
  only if there exist $a,b\in\{1,-1\}$ and $h,k\in\R$ such that
  $p(x)=a\,q(bx+h)+k$ for all $x\in\R$.
\end{enumerate}
\end{maintheorem}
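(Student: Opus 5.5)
The plan for Theorem~\ref{thm:B} is to handle the two parts separately. Part (a) is an arc-length argument; part (b) reduces to the rigidity of isometries of $\R^2$ together with a case analysis of how an isometry of the plane can carry one polynomial graph onto another.

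For (a), parametrize $G_p$ by $\gamma(x)=(x,p(x))$ and set
\[
  s(x)=\int_0^x\sqrt{1+p'(t)^2}\,dt .
\]
Because $\sqrt{1+p'^2}\ge 1$, the map $s$ is a strictly increasing diffeomorphism of $\R$ onto $\R$; surjectivity follows from $|s(x)|\ge|x|$. A curve in $G_p$ joining two points $\gamma(x_1),\gamma(x_2)$ must cover the graph between them, since $G_p$ is homeomorphic to $\R$ via the projection. Hence $\din(\gamma(x_1),\gamma(x_2))=|s(x_2)-s(x_1)|$, and $\gamma\circ s^{-1}$ is an inner isometry from $\R$ onto $G_p$. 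Composing one such map for $p$ with the inverse of the one for $q$ gives (a).

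For (b), the ``if'' direction is direct. The map $(x,y)\mapsto(bx+h,\,a(y-k))$ is an isometry of $\R^2$ of the allowed type, and it carries $G_p$ onto $G_q$ exactly when $p(x)=a\,q(bx+h)+k$. For the converse, I would invoke Lemma~\ref{lem:extension}: an outer isometry $\Gamma\colon G_p\to G_q$ extends to an isometry $\Phi(v)=Av+c$ of $\R^2$, with $A\in O(2)$, satisfying $\Phi(G_p)=G_q$. The task is then to show that $A$ is diagonal with entries $\pm1$.

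This step is the main one, and I would argue it with asymptotic directions. Since $\deg p\ge2$, the unit tangent to $G_p$ at $(x,p(x))$ tends to $(0,\pm1)$ as $x\to\pm\infty$. The image $\Phi(G_p)=G_q$ is unbounded, so $q$ is nonconstant. If $\deg q\le1$, then $G_q$ is a line while $G_p$ is not, since it has nonzero curvature somewhere; this is a contradiction. So $\deg q\ge2$, and the tangent directions of $G_q$ also tend to vertical. Under $\Phi$, the tangent directions of $G_p$ at infinity are mapped by $A$ to those of $G_q$ at infinity. Therefore $A$ sends the vertical line to the vertical line, and $A=\operatorname{diag}(b',a)$ with $a,b'\in\{\pm1\}$. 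With $c=(c_1,c_2)$, the equation $\Phi(G_p)=G_q$ reads $a\,p(x)+c_2=q(b'x+c_1)$ for all $x$. Solving for $p$ gives
\[
  p(x)=a\,q(b'x+c_1)-a c_2,
\]
which is the asserted form with $b=b'$, $h=c_1$, $k=-ac_2$.

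The delicate points are making the asymptotic-direction argument rigorous and checking that the ends match up correctly. To make it rigorous, I would note that $\Phi$ maps the set of limit points of unit tangents along $G_p$ at infinity onto the corresponding set for $G_q$, and each of these sets lies in $\{(0,\pm1)\}$. A more algebraic alternative is to substitute the rotation into $q(x\cos\theta-p(x)\sin\theta+c_1)=x\sin\theta+p(x)\cos\theta+c_2$ and compare degrees. If $\sin\theta\ne0$, the left side has degree $\deg q\cdot\deg p\ge4$ while the right side has degree $\deg p$, a contradiction; the case of a reflection is handled the same way. I would use this degree comparison, since it avoids the limit argument entirely.
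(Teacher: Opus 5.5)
Your proposal is correct and, in the version you settle on, follows the paper's proof. For (a) it is the same arc-length parametrization; for (b) it is the extension of $\Gamma$ to a plane isometry via Lemma~\ref{lem:extension}, followed by the degree comparison $\deg q\cdot\deg p\ge 2\deg p>\deg p$ when $\sin\theta\ne0$, except that you exclude $\deg q\le 1$ directly through the ambient isometry instead of the paper's collinearity Lemma~\ref{lem:degree-real}. One harmless slip: unboundedness of $G_q$ does not force $q$ to be nonconstant, since a horizontal line is unbounded, but your next sentence already excludes every $\deg q\le 1$.
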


Part (b) says that outer isometries between graphs of real polynomials of
degree at least $2$ are generated by translations, reflections in
horizontal and vertical lines and half-turns; no rotation by an angle
other than $0$ or $\pi$ occurs. As a consequence, the symmetry group of
such a graph has at most two elements (Corollary~\ref{cor:symmetry}).
Part (a) is elementary, but it contradicts a common intuition: the
curvature of a plane curve is invariant under rigid motions, but it is
not an inner invariant; for instance, the parabola $y=x^2$ is inner
isometric to a line. It is precisely because graphs of complex
polynomials are surfaces, whose Gaussian curvature is intrinsic, that
the complex case behaves differently.

Comparing Theorems~\ref{thm:A} and~\ref{thm:B}, note that for $p,q$ with
real coefficients one has $\bar q=q$, so the complex graphs are
isometric if and only if $p(z)=\alpha\,q(\beta z+h)+k$ with
$|\alpha|=|\beta|=1$. This is strictly weaker than Theorem~\ref{thm:B}(b):
the complex graphs of $x^3+x$ and $x^3-x$ are isometric, but their real
graphs are not outer isometric (Example~\ref{ex:cubic}).

To the best of our knowledge, these statements are natural, but we were not able to find them explicitly in the literature..

The paper is organized as follows. Section~\ref{sec:prelim} collects
preliminary results: extension of outer isometries, polynomials in $z$
and $\bar z$, and isometries between conformal metrics on $\C$.
Section~\ref{sec:complex} contains the proof of
Theorem~\ref{thm:A}, and Section~\ref{sec:real} that of
Theorem~\ref{thm:B}. Section~\ref{sec:final} contains some remarks and
questions.

\section{Preliminaries}\label{sec:prelim}

Throughout, $\ip{\cdot}{\cdot}$ and $|\cdot|$ denote the Euclidean inner
product and norm of $\R^N$, and $O(N)$ the orthogonal group. We identify
$\C^2$ with $\R^4$; the corresponding inner product is
$\ip{(z_1,w_1)}{(z_2,w_2)}=\re(z_1\bar z_2+w_1\bar w_2)$. For a
polynomial $q=\sum q_jz^j\in\C[z]$ we write $\bar q=\sum\bar q_jz^j$, so
that $\conj{q(z)}=\bar q(\bar z)$.

\subsection{Outer and inner isometries}

\begin{lemma}\label{lem:extension}
Let $X\subset\R^N$ and let $f\colon X\to\R^N$ preserve Euclidean
distances. Then there are $A\in O(N)$ and $c\in\R^N$ with $f(v)=Av+c$ for
all $v\in X$.
\end{lemma}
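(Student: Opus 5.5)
The plan is the classical argument via the polarization identity. If $X=\emptyset$ there is nothing to prove, so fix a base point $x_0\in X$ and set $c_0=f(x_0)$. Define $g(v)=f(v+x_0)-c_0$ on $Y=X-x_0$. Then $g(0)=0$ and $g$ still preserves distances. Taking one argument to be $0$ gives $|g(u)|=|u|$ for all $u\in Y$. The polarization identity
\[
  \ip{u}{v}=\tfrac12\bigl(|u|^2+|v|^2-|u-v|^2\bigr)
\]
then shows that $\ip{g(u)}{g(v)}=\ip{u}{v}$ for all $u,v\in Y$. So $g$ preserves inner products, and it suffices to extend such a map to an orthogonal map of $\R^N$.

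Next I would show that $g$ extends to a linear isometry $L\colon V\to\R^N$, where $V=\operatorname{span}Y$. Choose a basis $u_1,\dots,u_m$ of $V$ from among the elements of $Y$, and let $L$ be the linear map with $L(u_i)=g(u_i)$. The main point is that $L(u)=g(u)$ for every $u\in Y$, not just for the basis vectors. To see this, write $u=\sum a_iu_i$ and compute
\[
  \Bigl|g(u)-\sum a_i g(u_i)\Bigr|^2 .
\]
Expanding this square, every term is an inner product of values of $g$ on points of $Y$. Replacing each by the corresponding inner product of the points themselves turns the expression into $|u-\sum a_iu_i|^2=0$. The same preservation of inner products shows that $L$ maps the orthonormal basis obtained by Gram--Schmidt from $u_1,\dots,u_m$ to an orthonormal set. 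Hence $L$ is a linear isometry onto $W=L(V)$, and $\dim W=\dim V=m$.

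Finally, $V^\perp$ and $W^\perp$ both have dimension $N-m$, so I pick any linear isometry $V^\perp\to W^\perp$ between them. Combining it with $L$ gives $A\in O(N)$ with $Au=g(u)$ for all $u\in Y$. Unwinding the translation, for $v\in X$,
\[
  f(v)=g(v-x_0)+c_0=Av+(c_0-Ax_0),
\]
so the statement holds with $c=c_0-Ax_0$. The only step needing care is the consistency check in the second paragraph, namely that the linear extension agrees with $g$ on all of $Y$ rather than only on the chosen basis. The norm-expansion computation settles it, and the rest is routine linear algebra.
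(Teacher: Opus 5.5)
Your proposal is correct and follows essentially the same route as the paper. Both arguments translate to a base point, use polarization to get inner-product preservation, define a linear map on a basis of the span of $Y$ chosen from $Y$, check that it agrees with $g$ on all of $Y$, and extend it orthogonally on the orthogonal complement. The only difference is in the consistency check: the paper shows $g(v)-A_0v\perp A_0(V)$ and concludes by Pythagoras using $|g(v)|=|A_0v|$, while you expand $|g(u)-\sum a_i g(u_i)|^2$ directly; both rest on the same inner-product identities on $Y$.
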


\begin{proof}
Fix $v_0\in X$ and set $Y=X-v_0$ and $g(v)=f(v+v_0)-f(v_0)$ for $v\in Y$.
Then $g(0)=0$ and $g$ preserves distances, so $|g(v)|=|v|$, and expanding
$|g(u)-g(v)|^2=|u-v|^2$ gives $\ip{g(u)}{g(v)}=\ip{u}{v}$ for
$u,v\in Y$. Let $V$ be the linear span of $Y$, choose a basis
$u_1,\dots,u_k\in Y$ of $V$, and let $A_0\colon V\to\R^N$ be the linear
map with $A_0u_i=g(u_i)$. Then $A_0$ preserves inner products. For
$v\in Y$ we have
$\ip{g(v)}{A_0u_i}=\ip{v}{u_i}=\ip{A_0v}{A_0u_i}$ for all $i$, so
$g(v)-A_0v\perp A_0(V)$, whence
$|g(v)|^2=|A_0v|^2+|g(v)-A_0v|^2$; since $|g(v)|=|v|=|A_0v|$, we get
$g(v)=A_0v$. Finally, $A_0$ extends to some $A\in O(N)$ by mapping an
orthonormal basis of $V^\perp$ onto one of $A_0(V)^\perp$, and
$f(v)=Av+c$ with $c=f(v_0)-Av_0$.
\end{proof}

For $p\in\K[t]$ let $F_p\colon\K\to G_p$, $F_p(t)=(t,p(t))$, and let
$\pi(u,w)=u$ be the first projection, so that $F_p$ is a smooth embedding
with inverse $\pi|_{G_p}$. A \emph{curve} in $G_p$ is a continuous map
$\gamma\colon[0,1]\to G_p$ such that $\pi\circ\gamma$ is piecewise $C^1$;
its length is $L(\gamma)=\int_0^1|\gamma'(t)|\,dt$, and $\din$ is defined
using such curves. Every curve in $G_p$ has the form $\gamma=F_p\circ c$
with $c=\pi\circ\gamma$, and $\gamma'=(c',\,p'(c)\,c')$, so that
\begin{equation}\label{eq:length}
  L(F_p\circ c)=\int_0^1\sqrt{1+|p'(c(t))|^2}\;|c'(t)|\,dt .
\end{equation}
In other words, $F_p$ is an isometry from $\K$, endowed with the length
distance of the Riemannian metric
\begin{equation}\label{eq:metric}
  g_p=\bigl(1+|p'(t)|^2\bigr)\,|dt|^2,
\end{equation}
onto $(G_p,\din)$.

\begin{lemma}\label{lem:outer-inner}
Every outer isometry $\Gamma\colon G_p\to G_q$ is an inner isometry.
\end{lemma}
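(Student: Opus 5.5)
By Lemma~\ref{lem:extension}, an outer isometry $\Gamma\colon G_p\to G_q$ is the restriction of an ambient Euclidean isometry $\Phi(v)=Av+c$ with $A\in O(N)$, where $N=2$ or $4$. The plan is to show that $\Phi$ carries curves in $G_p$ to curves in $G_q$ of the same length, and then use the definition of $\din$ as an infimum of lengths.

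\emph{Admissibility of image curves.} Let $\gamma=F_p\circ c$ be a curve in $G_p$, with $c$ piecewise $C^1$. Then $\Gamma\circ\gamma=\Phi\circ\gamma$ is continuous with values in $G_q$. We must check that $\pi\circ\Phi\circ\gamma$ is piecewise $C^1$. This holds because $\Phi\circ F_p\circ c$ is a composition of the smooth map $\Phi\circ F_p\colon\K\to\R^N$ with a piecewise $C^1$ map, and $\pi$ is linear. So $\Gamma\circ\gamma$ is an admissible curve in $G_q$.

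\emph{Equality of lengths.} Off the finitely many break points, $(\Phi\circ\gamma)'=A\gamma'$. Since $A$ is orthogonal, $|(\Phi\circ\gamma)'|=|\gamma'|$, hence $L(\Gamma\circ\gamma)=L(\gamma)$. Taking the infimum over curves joining $u$ to $v$ gives $\din^{G_q}(\Gamma u,\Gamma v)\le\din^{G_p}(u,v)$.

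\emph{Reverse inequality.} The inverse $\Gamma^{-1}\colon G_q\to G_p$ is also an outer isometry, being the restriction of $\Phi^{-1}$. Here $\Gamma$ is a bijection by the definition of isometric graphs. Running the same argument for $\Gamma^{-1}$ gives the opposite inequality, so $\Gamma$ preserves $\din$.

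The only point needing care is the admissibility check, namely that the class of curves used to define $\din$ is preserved. It is handled by the smoothness of $\Phi\circ F_p$ together with the fact that $\pi\circ\Gamma\circ F_p$ is the first coordinate of a smooth map. No other obstacle is expected.
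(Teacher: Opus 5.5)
Your proposal is correct and follows the paper's proof: you extend $\Gamma$ to an ambient affine isometry via Lemma~\ref{lem:extension}, show that it carries admissible curves in $G_p$ to admissible curves in $G_q$ of the same length, and apply the same argument to the inverse. Your explicit check that $\pi\circ\Phi\circ F_p\circ c$ is piecewise $C^1$ fills in a detail the paper leaves implicit.
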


\begin{proof}
By Lemma~\ref{lem:extension}, $\Gamma$ is the restriction of an affine
isometry $E$ of the ambient space. If $\gamma$ is a curve in $G_p$, then
$E\circ\gamma$ is a curve in $G_q$ of the same length, and the same holds
for $E^{-1}$. Hence $\Gamma$ preserves $\din$.
\end{proof}

\subsection{Polynomials in \texorpdfstring{$z$}{z} and \texorpdfstring{$\bar z$}{z-bar}}

A polynomial expression $P=\sum c_{jk}z^j\bar z^k$ with $c_{jk}\in\C$
defines a function $\C\to\C$. Since $z$ and $\bar z$ are not independent
functions, the following fact, used systematically below, deserves a
proof.

\begin{lemma}\label{lem:zzbar}
If $P=\sum c_{jk}z^j\bar z^k$ vanishes at every $z\in\C$, then
$c_{jk}=0$ for all $j,k$. Consequently, identities between polynomials in
$z$ and $\bar z$ that hold for all $z\in\C$ may be checked coefficientwise,
and we may regard $\C[z,\bar z]$ as a polynomial ring in two variables.
\end{lemma}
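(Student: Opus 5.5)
We use polar coordinates: restrict $P$ to circles and read off Fourier coefficients. Write $z=re^{i\theta}$ with $r>0$ and $\theta\in\R$. Then
\[
  P(re^{i\theta})=\sum_{j,k}c_{jk}\,r^{j+k}e^{i(j-k)\theta}
  =\sum_{m\in\mathbb{Z}}\Bigl(\sum_{j-k=m}c_{jk}\,r^{j+k}\Bigr)e^{im\theta}.
\]
This is a finite sum. For each fixed $r>0$ it is a trigonometric polynomial in $\theta$ that vanishes identically.

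Next we invoke orthogonality of the characters $e^{im\theta}$ on $[0,2\pi]$. Multiplying by $e^{-im\theta}$ and integrating over $[0,2\pi]$ shows that, for each $m\in\mathbb{Z}$ and each $r>0$,
\[
  Q_m(r):=\sum_{j-k=m}c_{jk}\,r^{j+k}=0 .
\]
For fixed $m$, distinct pairs $(j,k)$ with $j-k=m$ have distinct sums $j+k$, since $j$ and $k$ are determined by $j-k$ and $j+k$. So $Q_m$ is an ordinary real-variable polynomial in $r$ whose coefficients are exactly the $c_{jk}$ with $j-k=m$, each appearing once. A polynomial vanishing on the infinite set $(0,\infty)$ is the zero polynomial, hence all these $c_{jk}$ vanish. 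Ranging over all $m$ gives $c_{jk}=0$ for every $j,k$.

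The "consequently" part follows by applying this to the difference of the two sides of an identity. It shows that the evaluation map from the formal polynomial ring $\C[Z,W]$ to functions $\C\to\C$, given by $Z\mapsto z$ and $W\mapsto\bar z$, is injective. Therefore $\C[z,\bar z]$ may be identified with a polynomial ring in two variables.

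The only delicate point is the bookkeeping that $(j,k)\mapsto(j-k,j+k)$ is injective, which ensures no cancellation inside $Q_m$. Alternatively, one can substitute $z=x+iy$ and note that the linear change of variables $(z,\bar z)\leftrightarrow(x,y)$ is invertible over $\C$, reducing the statement to the fact that a real polynomial in $x,y$ vanishing on $\R^2$ is zero. We choose the polar argument because it is self-contained.
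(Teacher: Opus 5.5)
Your proof is correct, but it takes a different route from the paper's.

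The paper argues purely algebraically. The substitution $z\mapsto x+iy$, $w\mapsto x-iy$ is a ring isomorphism $\C[z,w]\to\C[x,y]$, with explicit inverse. So $P$ corresponds to a polynomial $Q(x,y)$ vanishing on $\R^2$. Then $Q=0$ follows by writing $Q=\sum_k a_k(x)y^k$ and using the infinitely-many-roots argument twice. This is essentially the alternative you mention in your last paragraph; the paper also notes a third route, $\partial^j\dbar^kP(0)=j!\,k!\,c_{jk}$.

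Your polar argument instead separates monomials by angular frequency $j-k$ and radial degree $j+k$. The injectivity of $(j,k)\mapsto(j-k,j+k)$ that you check is exactly what guarantees each $Q_m$ carries the single coefficients $c_{jk}$ with no cancellation.

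What each approach gives:
\begin{itemize}
\item The paper's approach is analysis-free. It exhibits the identification of $\C[z,\bar z]$ with a polynomial ring in two variables directly, as the isomorphism with $\C[x,y]$, which is literally the last clause of the statement.
\item Your approach proves a different strengthening: vanishing on infinitely many circles centred at $0$ already forces $P=0$. It also gives a decomposition adapted to the rotations $z\mapsto e^{i\theta}z$.
\end{itemize}

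If you want to keep your route but avoid the orthogonality integral, fix $r$ and view $\sum_m Q_m(r)w^m$ as a Laurent polynomial in $w=e^{i\theta}$. It vanishes on the unit circle, which is infinite, so after multiplying by a large power of $w$ you get an ordinary polynomial with infinitely many roots. That makes your argument algebraic as well.
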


\begin{proof}
The ring homomorphism $\C[z,w]\to\C[x,y]$, $z\mapsto x+iy$,
$w\mapsto x-iy$, is an isomorphism, with inverse $x\mapsto(z+w)/2$,
$y\mapsto(z-w)/(2i)$. The polynomial $Q(x,y)=P(x+iy,x-iy)$ vanishes on
$\R^2$; writing $Q=\sum_k a_k(x)\,y^k$, for each real $x$ the polynomial
$y\mapsto Q(x,y)$ has infinitely many roots, so $a_k(x)=0$, and then each
$a_k$ has infinitely many roots, so $a_k=0$. Hence $Q=0$ and $P=0$.
\end{proof}

Alternatively, in terms of the Wirtinger derivatives introduced below,
$\partial^j\dbar^kP(0)=j!\,k!\,c_{jk}$.

\begin{lemma}\label{lem:moduli}
Let $f,g\in\C[z]$ with $f$ non-constant, and suppose that
$|f(z)|^2-|g(z)|^2=c$ for all $z\in\C$, where $c$ is a constant. Then
$c=0$ and $f=v\,g$ for some $v\in\C$ with $|v|=1$.
\end{lemma}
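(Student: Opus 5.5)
Write $f=\sum_{j=0}^n a_jz^j$ and $g=\sum_{k=0}^m b_kz^k$ with $a_n\neq0$ (possible since $f$ is non-constant). I will expand both moduli in $\C[z,\bar z]$ and compare coefficients using Lemma~\ref{lem:zzbar}. We have
\[
  |f(z)|^2=f(z)\,\bar f(\bar z)=\sum_{j,l}a_j\bar a_l\,z^j\bar z^l,
\]
and similarly for $g$. The hypothesis therefore becomes the identity
\[
  \sum_{j,l}\bigl(a_j\bar a_l-b_j\bar b_l\bigr)z^j\bar z^l=c
\]
in $\C[z,\bar z]$, with the conventions $a_j=0$ for $j>n$ and $b_k=0$ for $k>m$. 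Comparing the coefficient of $z^j\bar z^l$ for $(j,l)\neq(0,0)$ gives $a_j\bar a_l=b_j\bar b_l$ for all such pairs.

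First I determine the degree of $g$. The coefficient of $z^n\bar z^n$ gives $|a_n|^2=|b_n|^2$, so $b_n\neq0$. If $m>n$, the coefficient of $z^m\bar z^m$ gives $|b_m|^2=0$, which is impossible, so $m=n$.

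Next I find the unimodular constant. Set $v=a_n/b_n$, so that $|v|=1$. For any $j$, the coefficient of $z^j\bar z^n$ (with $(j,n)\neq(0,0)$, which holds since $n\ge1$) gives $a_j\bar a_n=b_j\bar b_n$. Hence
\[
  a_j=b_j\,\frac{\bar b_n}{\bar a_n}=b_j\,\frac{a_n}{b_n}=v\,b_j ,
\]
where the middle equality uses $|a_n|=|b_n|$. This holds for every $j$, including $j=0$, because the pair $(0,n)$ is not $(0,0)$. Therefore $f=v\,g$.

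Finally, $|f|^2-|g|^2=(|v|^2-1)|g|^2=0$ identically, so $c=0$; equivalently, this is the $(0,0)$ coefficient, $|a_0|^2-|b_0|^2=|v|^2|b_0|^2-|b_0|^2=0$.

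The main point needing care is using the mixed coefficients $z^j\bar z^n$ rather than only the diagonal ones. The diagonal coefficients only give $|a_j|=|b_j|$, which does not fix the phases. Using the top index $n$ on one side fixes a single common phase $v$ for all $j$, and this is where $n\ge1$ (non-constancy of $f$) is needed to avoid the excluded constant term.
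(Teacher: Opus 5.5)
Your proof is correct and follows the paper's argument. Both compare coefficients in $\C[z,\bar z]$ via Lemma~\ref{lem:zzbar}, use a diagonal coefficient $z^j\bar z^j$ with $j\ge 1$ and $f_j\neq 0$ to get equal moduli, and then use the mixed coefficients to fix a single unimodular $v$; the paper takes any such $j$, you take $j=n$, and your separate step showing $\deg g=n$ is harmless but redundant, since $f=v\,g$ already forces it.
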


\begin{proof}
Write $f=\sum f_jz^j$ and $g=\sum g_jz^j$. By Lemma~\ref{lem:zzbar}, the
identity $\sum_{j,k}(f_j\bar f_k-g_j\bar g_k)\,z^j\bar z^k=c$ gives
$f_j\bar f_k=g_j\bar g_k$ for all $(j,k)\ne(0,0)$. Choose $j\ge 1$ with
$f_j\ne 0$. Taking $k=j$ gives $|g_j|=|f_j|\ne 0$, and then, for every
$k\ge 0$, $f_k=v\,g_k$ with $v=\conj{g_j/f_j}$, which has modulus $1$.
Therefore $|f|^2=|g|^2$ and $c=0$.
\end{proof}

\subsection{Isometries between conformal metrics on \texorpdfstring{$\C$}{C}}

For $\varphi\colon\C\to\C$ of class $C^1$, the Wirtinger derivatives are
\[
  \partial\varphi=\tfrac12(\varphi_x-i\varphi_y),\qquad
  \dbar\varphi=\tfrac12(\varphi_x+i\varphi_y).
\]
Writing $u=u_1+iu_2$ with
$u_1=(u+\bar u)/2$ and $u_2=(u-\bar u)/(2i)$ in
$d\varphi_z(u)=\varphi_xu_1+\varphi_yu_2$, one gets
\begin{equation}\label{eq:wirtinger}
  d\varphi_z(u)=\partial\varphi(z)\,u+\dbar\varphi(z)\,\bar u
  \qquad(z,u\in\C),
\end{equation}
and $\dbar\varphi\equiv 0$ is the Cauchy--Riemann system.

\begin{lemma}\label{lem:linear}
Let $L(u)=Au+B\bar u$ with $A,B\in\C$, an $\R$-linear map of
$\C\cong\R^2$.
\begin{enumerate}[(i)]
\item The coefficients are determined by $L$, and
  $\det L=|A|^2-|B|^2$.
\item If $|L(u)|=\mu|u|$ for all $u\in\C$ and some $\mu>0$, then
  $A\bar B=0$.
\end{enumerate}
\end{lemma}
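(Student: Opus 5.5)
For (i), I will use the substitution $u=u_1+iu_2$ and work in the real basis $\{1,i\}$. From $L(1)=A+B$ and $L(i)=i(A-B)$, the values of $L$ on this basis determine both $A=\tfrac12\bigl(L(1)-iL(i)\bigr)$ and $B=\tfrac12\bigl(L(1)+iL(i)\bigr)$, so the coefficients are unique. Equivalently, if $Au+B\bar u=0$ for all $u$, then taking $u=1$ and $u=i$ gives $A+B=0$ and $A-B=0$. For the determinant, write $A=a_1+ia_2$ and $B=b_1+ib_2$. The real matrix of $u\mapsto Au$ is $\begin{pmatrix}a_1&-a_2\\ a_2&a_1\end{pmatrix}$, and the matrix of $u\mapsto B\bar u$ is $\begin{pmatrix}b_1&b_2\\ b_2&-b_1\end{pmatrix}$. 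Adding these and computing the $2\times 2$ determinant gives
\[
(a_1+b_1)(a_1-b_1)-(b_2-a_2)(a_2+b_2)=a_1^2+a_2^2-b_1^2-b_2^2=|A|^2-|B|^2 .
\]
This step is purely computational.

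For (ii), I will expand $|L(u)|^2$ using $|w|^2=w\bar w$:
\[
|Au+B\bar u|^2=(|A|^2+|B|^2)|u|^2+2\re\bigl(A\bar B\,u^2\bigr).
\]
Here the cross terms are $A\bar B u\bar{\bar u}+\bar A B\bar u\bar u=A\bar B u^2+\conj{A\bar B u^2}$. The hypothesis then says that
\[
2\re\bigl(A\bar B u^2\bigr)=\bigl(\mu^2-|A|^2-|B|^2\bigr)|u|^2
\]
for all $u$. Taking $u=e^{i\theta}$ makes the right-hand side a constant $c$, while the left-hand side is $2\re\bigl(A\bar B e^{2i\theta}\bigr)$. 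If $A\bar B=re^{i\phi}$ with $r>0$, this function equals $2r\cos(2\theta+\phi)$, which takes both values $\pm 2r$ and so is not constant. Hence $A\bar B=0$.

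Alternatively, I could invoke Lemma~\ref{lem:zzbar} on the polynomial identity in $u,\bar u$. The coefficient of $u^2$ on the left is $A\bar B$ and on the right it is $0$, which gives $A\bar B=0$ directly (and also $|A|^2+|B|^2=\mu^2$).

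I do not expect a real obstacle here. The only point needing care is getting the sign of the determinant right in (i), and the explicit matrix computation above handles that.
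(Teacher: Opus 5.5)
Your proposal is correct and follows essentially the same route as the paper. For (i) you read off $A,B$ from $L(1)$ and $L(i)$ and compute the same matrix in the basis $\{1,i\}$; splitting it as the sum of the matrices of $u\mapsto Au$ and $u\mapsto B\bar u$ is only a cosmetic difference. For (ii) you use the paper's expansion of $|Au+B\bar u|^2$ on the unit circle, and you spell out why $\theta\mapsto\re(A\bar B e^{2i\theta})$ is non-constant when $A\bar B\ne 0$, which the paper leaves implicit.
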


\begin{proof}
(i) $L(1)=A+B$ and $L(i)=i(A-B)$ determine $A$ and $B$. With
$A=a_1+ia_2$ and $B=b_1+ib_2$, the matrix of $L$ in the basis $\{1,i\}$
has columns $(a_1+b_1,\,a_2+b_2)$ and $(b_2-a_2,\,a_1-b_1)$, whose
determinant is $a_1^2+a_2^2-b_1^2-b_2^2$.
(ii) We have $|Au+B\bar u|^2=(|A|^2+|B|^2)|u|^2+2\re(A\bar B\,u^2)$.
Taking $u=e^{i\theta}$, the function
$\theta\mapsto\re(A\bar B\,e^{2i\theta})$ must be constant, which forces
$A\bar B=0$.
\end{proof}

\begin{lemma}\label{lem:entire}
Every injective entire function $\varphi\colon\C\to\C$ has the form
$\varphi(z)=az+b$ with $a\ne 0$.
\end{lemma}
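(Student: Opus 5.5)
The natural route is through the Casorati--Weierstrass theorem combined with the open mapping theorem. First, $\varphi$ cannot be constant, since it is injective. Consider its Laurent expansion at $\infty$, namely the power series $\varphi(z)=\sum_{n\ge0}a_nz^n$. The plan is to show that $\varphi$ is a polynomial by ruling out an essential singularity at $\infty$. Once that is done, injectivity forces the degree to be $1$. Indeed, a polynomial of degree $d\ge 2$ takes some value $w$ at fewer than $d$ points only when $w$ is a critical value. Choosing $w$ not a critical value, which is possible because there are finitely many critical values, the equation $\varphi(z)=w$ has exactly $d\ge 2$ distinct roots, contradicting injectivity. Hence $\varphi(z)=az+b$ with $a\ne0$.

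For the key step, suppose $\varphi$ is not a polynomial, so that $\psi(\zeta)=\varphi(1/\zeta)$ has an essential singularity at $\zeta=0$. By the open mapping theorem, $U=\varphi(\{|z|<1\})$ is a nonempty open set. By Casorati--Weierstrass applied to $\psi$ on the punctured disc $0<|\zeta|<1$, the set $\varphi(\{|z|>1\})$ is dense in $\C$, so it meets $U$. This yields $z_1$ with $|z_1|<1$ and $z_2$ with $|z_2|>1$ such that $\varphi(z_1)=\varphi(z_2)$, contradicting injectivity. Therefore $\varphi$ is a polynomial.

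I do not expect a serious obstacle. The only point needing care is the degree argument. An alternative to the critical-value argument is to note that an injective holomorphic map has nonvanishing derivative, so $\varphi'$ is a polynomial without zeros and hence constant. This is cleaner, and I would use it: injective holomorphic functions are locally biholomorphic, since a zero of $\varphi'$ of order $m$ makes $\varphi$ locally $(m+1)$-to-one. Thus $\varphi'\equiv a\ne0$ and $\varphi(z)=az+b$.
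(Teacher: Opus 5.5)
Your proof is correct and matches the paper's argument: Casorati--Weierstrass makes $\varphi(\{|z|>1\})$ dense, so it meets the nonempty open set $\varphi(\{|z|<1\})$, which contradicts injectivity; hence $\varphi$ is a polynomial, and injectivity then forces degree $1$. The only difference is that you spell out the degree step (noting that $\varphi'$ is a zero-free polynomial, hence constant), where the paper simply cites the fundamental theorem of algebra.
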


\begin{proof}
If $\varphi$ were not a polynomial, it would have an essential
singularity at infinity, and by the Casorati--Weierstrass theorem
$\varphi(\{|z|>1\})$ would be dense in $\C$; but it is disjoint, by
injectivity, from the non-empty open set $\varphi(\{|z|<1\})$. Thus
$\varphi$ is a polynomial, and an injective polynomial has degree $1$ by
the fundamental theorem of algebra (see e.g.~\cite{Ahlfors}).
\end{proof}

\begin{proposition}\label{prop:conformal}
Let $\lambda,\nu\colon\C\to(0,\infty)$ be continuous, and let
$\varphi\colon\C\to\C$ be a $C^1$ diffeomorphism such that
\begin{equation}\label{eq:pullback}
  \nu(\varphi(z))\,|d\varphi_z(u)|^2=\lambda(z)\,|u|^2
  \qquad\text{for all } z,u\in\C,
\end{equation}
that is, $\varphi^*(\nu\,|dw|^2)=\lambda\,|dz|^2$. Then there are
$a,b\in\C$, $a\ne 0$, such that either $\varphi(z)=az+b$ for all $z$ or
$\varphi(z)=a\bar z+b$ for all $z$. In both cases
$\nu(\varphi(z))\,|a|^2=\lambda(z)$.
\end{proposition}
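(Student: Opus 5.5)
By \eqref{eq:wirtinger}, at each $z$ the differential $d\varphi_z$ is the $\R$-linear map $u\mapsto A(z)u+B(z)\bar u$, with $A=\partial\varphi$ and $B=\dbar\varphi$. The plan is to show that $\varphi$ is holomorphic or antiholomorphic, and then apply Lemma~\ref{lem:entire}.

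First, condition \eqref{eq:pullback} says that $|d\varphi_z(u)|=\mu(z)|u|$ with $\mu(z)=\sqrt{\lambda(z)/\nu(\varphi(z))}>0$. Lemma~\ref{lem:linear}(ii) then gives $A(z)\overline{B(z)}=0$ for every $z$. Because $\varphi$ is a diffeomorphism, $\det d\varphi_z=|A|^2-|B|^2\neq 0$ by Lemma~\ref{lem:linear}(i). So at each point exactly one of $A(z)$, $B(z)$ vanishes and the other does not.

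Next, set $U=\{z: B(z)=0\}=\{z:|A(z)|>|B(z)|\}$ and $V=\{z:A(z)=0\}=\{z:|B(z)|>|A(z)|\}$. Both are open, since $A$ and $B$ are continuous and the strict inequalities are open conditions. They are disjoint and cover $\C$, so by connectedness one of them is all of $\C$. Equivalently, the sign of the Jacobian is constant.

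If $U=\C$, then $\dbar\varphi\equiv 0$. A $C^1$ map satisfying the Cauchy--Riemann equations is holomorphic, so $\varphi$ is an injective entire function, and Lemma~\ref{lem:entire} gives $\varphi(z)=az+b$ with $a\neq0$.

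If $V=\C$, apply the same reasoning to $\psi(z)=\varphi(\bar z)$. Then $\dbar\psi(z)=\overline{\partial\bar\varphi}$-type terms reduce to a multiple of $A(\bar z)=0$; concretely, $d\psi_z(u)=d\varphi_{\bar z}(\bar u)=B(\bar z)u$. So $\psi$ is an injective entire function, hence $\psi(z)=az+b$, and therefore $\varphi(z)=a\bar z+b$.

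In both cases $|d\varphi_z(u)|=|a||u|$. Substituting this into \eqref{eq:pullback} yields $\nu(\varphi(z))|a|^2=\lambda(z)$.

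The only delicate step is the connectedness argument. It works because the alternative between $A=0$ and $B=0$ is equivalent to the strict, open conditions on the sign of $|A|^2-|B|^2$. Nondegeneracy of $d\varphi_z$ is what rules out points where both $A$ and $B$ vanish, and this is where the diffeomorphism hypothesis is used. The rest is routine.
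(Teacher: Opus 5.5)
Your proof is correct and follows the paper's argument essentially step for step. Lemma~\ref{lem:linear}(ii) gives $\partial\varphi\,\conj{\dbar\varphi}=0$, and the nonvanishing Jacobian $|\partial\varphi|^2-|\dbar\varphi|^2$ has constant sign on the connected plane; your open cover by $U$ and $V$ is just a rephrasing of this. Lemma~\ref{lem:entire} is then applied to $\varphi$ or to $\psi(z)=\varphi(\bar z)$. The sentence about ``$\overline{\partial\bar\varphi}$-type terms'' in the antiholomorphic case is garbled, but the concrete computation $d\psi_z(u)=\dbar\varphi(\bar z)\,u$ that follows it is exactly right and suffices.
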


\begin{proof}
By~\eqref{eq:pullback}, for each $z$ the $\R$-linear map $d\varphi_z$
multiplies norms by $\mu=\sqrt{\lambda(z)/\nu(\varphi(z))}>0$. By
\eqref{eq:wirtinger} and Lemma~\ref{lem:linear}(ii),
$\partial\varphi(z)\,\conj{\dbar\varphi(z)}=0$ for every $z$. By
Lemma~\ref{lem:linear}(i),
$\det d\varphi_z=|\partial\varphi(z)|^2-|\dbar\varphi(z)|^2$; this is a
continuous function which never vanishes, because $\varphi$ is a
diffeomorphism, so it has constant sign on the connected set $\C$. If it
is positive, then $\dbar\varphi\equiv 0$, so $\varphi$ is holomorphic,
and Lemma~\ref{lem:entire} gives $\varphi(z)=az+b$. If it is negative,
then $\partial\varphi\equiv 0$; the map $\psi(z)=\varphi(\bar z)$ satisfies
$d\psi_z(u)=d\varphi_{\bar z}(\bar u)
=\partial\varphi(\bar z)\,\bar u+\dbar\varphi(\bar z)\,u$, so by
Lemma~\ref{lem:linear}(i) $\dbar\psi(z)=\partial\varphi(\bar z)=0$, and
$\psi$ is an injective entire function. Hence $\psi(z)=az+b$ and
$\varphi(z)=a\bar z+b$. In both cases $|d\varphi_z(u)|=|a|\,|u|$, and
\eqref{eq:pullback} gives $\nu(\varphi(z))\,|a|^2=\lambda(z)$.
\end{proof}

\begin{remark}
Proposition~\ref{prop:conformal} uses in an essential way that the
domain is the whole plane: on a proper domain, such as the unit disc with
its hyperbolic metric, there are conformal isometries which are not
affine.
\end{remark}

\section{The complex case}\label{sec:complex}

For $p\in\C[z]$, the metric~\eqref{eq:metric} is
$g_p=(1+|p'(z)|^2)\,|dz|^2$, conformal to the Euclidean metric of $\C$.
Its Gaussian curvature is
\begin{equation}\label{eq:curvature}
  K_p=-\frac{\Delta\log(1+|p'|^2)}{2\,(1+|p'|^2)}
  =-\frac{2\,|p''|^2}{\bigl(1+|p'|^2\bigr)^3},
\end{equation}
where $\Delta=4\partial\dbar$; see~\cite{doCarmo} for the curvature of a
conformal metric, and note that
$\partial\dbar\log(1+|f|^2)=|f'|^2/(1+|f|^2)^2$ for holomorphic $f$. In
particular $K_p\le 0$, and $G_p$ is flat if and only if $\deg p\le 1$.
Formula~\eqref{eq:curvature} is not used in the proofs, but it shows that,
unlike the real case, graphs of complex polynomials have non-trivial
inner geometry.

\begin{theorem}\label{thm:inner-rigidity}
Let $p,q\in\C[z]$ with $\deg p\ge 2$, and let $\Gamma\colon G_p\to G_q$
be an inner isometry. Then there exist $\alpha,\beta,h,k\in\C$ with
$|\alpha|=|\beta|=1$ such that one of the following holds:
\begin{enumerate}[(i)]
\item $p(z)=\alpha\,q(\beta z+h)+k$ for all $z$, and
  $\Gamma(z,w)=\bigl(\beta z+h,\ \bar\alpha(w-k)\bigr)$ on $G_p$;
\item $p(z)=\alpha\,\bar q(\beta z+h)+k$ for all $z$, and
  $\Gamma(z,w)=\bigl(\conj{\beta z+h},\ \conj{\bar\alpha(w-k)}\bigr)$ on
  $G_p$.
\end{enumerate}
\end{theorem}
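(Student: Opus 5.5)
Transfer $\Gamma$ to the parameter plane and set $\varphi=\pi\circ\Gamma\circ F_p\colon\C\to\C$. Since $F_p$ and $F_q$ are isometries from $(\C,g_p)$ and $(\C,g_q)$ onto $(G_p,\din)$ and $(G_q,\din)$, the map $\varphi$ is a bijective distance-preserving map between the length spaces of the smooth Riemannian metrics $g_p$ and $g_q$. By the Myers--Steenrod theorem, $\varphi$ is a smooth Riemannian isometry, so $\varphi^*g_q=g_p$. This is exactly \eqref{eq:pullback} with $\lambda=1+|p'|^2$ and $\nu=1+|q'|^2$. Proposition~\ref{prop:conformal} then shows that either $\varphi(z)=az+b$ or $\varphi(z)=a\bar z+b$, with $a\neq0$ and
\[
|a|^2\bigl(1+|q'(\varphi(z))|^2\bigr)=1+|p'(z)|^2 .
\]

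In the holomorphic case, rewrite the identity as $|p'(z)|^2-|a\,q'(az+b)|^2=|a|^2-1$ for all $z$. Here $f=p'$ is non-constant because $\deg p\ge2$, and $g(z)=a\,q'(az+b)$ is a polynomial. Lemma~\ref{lem:moduli} gives $|a|^2-1=0$ and $p'=v\,a\,q'(az+b)$ with $|v|=1$. Put $\beta=a$, $h=b$ and $\alpha=v$; then $|\alpha|=|\beta|=1$, and $p'(z)=\alpha\beta q'(\beta z+h)=\frac{d}{dz}\bigl[\alpha q(\beta z+h)\bigr]$. Integrating gives $p(z)=\alpha q(\beta z+h)+k$. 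For the formula for $\Gamma$, note that $\Gamma(z,p(z))=F_q(\varphi(z))=(\beta z+h,q(\beta z+h))$. Moreover $q(\beta z+h)=\bar\alpha(p(z)-k)$, so on $G_p$ we have $\Gamma(z,w)=(\beta z+h,\bar\alpha(w-k))$. This is case (i).

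In the anti-holomorphic case, $\varphi(z)=a\bar z+b$. Let $\tilde q=\bar q$, so that $q'(a\bar z+b)=\conj{\bar q'(\bar a z+\bar b)}$. The identity becomes $|p'(z)|^2-|\bar a\,\bar q'(\bar a z+\bar b)|^2=|a|^2-1$. Lemma~\ref{lem:moduli} again gives $|a|=1$ and $p'(z)=\alpha\beta\,\bar q'(\beta z+h)$, now with $\beta=\bar a$, $h=\bar b$ and $|\alpha|=1$. Integrating yields $p=\alpha\bar q(\beta z+h)+k$. Then
\[
\Gamma(z,p(z))=\bigl(\conj{\beta z+h},\,q(\conj{\beta z+h})\bigr)=\bigl(\conj{\beta z+h},\,\conj{\bar q(\beta z+h)}\bigr),
\]
and $\bar q(\beta z+h)=\bar\alpha(p(z)-k)$, which gives case (ii).

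The main obstacle is the first step, the regularity of $\varphi$. Proposition~\ref{prop:conformal} needs $\varphi$ to be a $C^1$ diffeomorphism, whereas we only know that $\Gamma$ is a metric isometry for $\din$. I would justify this by noting that $F_p$ identifies $\din$ on $G_p$ with the Riemannian distance of $g_p$ on $\C$. The curve class used to define $\din$ (piecewise $C^1$ curves) is the standard one, so this is the Riemannian distance function. Both metrics are smooth and complete, since $g_p\ge|dz|^2$. Myers--Steenrod (in the Palais form) then yields that $\varphi$ is a smooth diffeomorphism with $\varphi^*g_q=g_p$. The remaining steps are routine bookkeeping with conjugations.
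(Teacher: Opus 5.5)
Your proposal is correct and follows the paper's proof essentially step for step. You pull $\Gamma$ back to $\varphi=\pi\circ\Gamma\circ F_p=F_q^{-1}\circ\Gamma\circ F_p$, invoke Myers--Steenrod and Proposition~\ref{prop:conformal}, apply Lemma~\ref{lem:moduli} in each case, and integrate. The only cosmetic difference is in the anti-holomorphic case: you take $g(z)=\bar a\,\bar q'(\bar az+\bar b)$ instead of the paper's $a\,\bar q'(\bar az+\bar b)$, which lets you read off $\alpha=v$ directly rather than $\alpha=va/\bar a$.
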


\begin{proof}
Let $\varphi=F_q^{-1}\circ\Gamma\circ F_p\colon\C\to\C$. By
\eqref{eq:length}, $\varphi$ is a bijection which preserves the
Riemannian distances of $g_p$ and $g_q$. By the Myers--Steenrod
theorem~\cite{MS} (see also~\cite{Palais}), $\varphi$ is a smooth
diffeomorphism with $\varphi^*g_q=g_p$. By
Proposition~\ref{prop:conformal}, applied with $\lambda=1+|p'|^2$ and
$\nu=1+|q'|^2$, either $\varphi(z)=az+b$ or $\varphi(z)=a\bar z+b$, with
$a\ne 0$ and
\begin{equation}\label{eq:key}
  |a|^2\bigl(1+|q'(\varphi(z))|^2\bigr)=1+|p'(z)|^2 .
\end{equation}

\emph{Case $\varphi(z)=az+b$.} Put $f=p'$ and $g(z)=a\,q'(az+b)$. Then
\eqref{eq:key} reads $|f|^2-|g|^2=|a|^2-1$. Since $\deg p\ge 2$, $f$ is
not constant, and Lemma~\ref{lem:moduli} gives $|a|=1$ and $p'=v\,g$ with
$|v|=1$. Since $v\,g$ is the derivative of $v\,q(az+b)$, we get
$p(z)=v\,q(az+b)+k$ for some $k\in\C$. This is~(i) with $\alpha=v$,
$\beta=a$, $h=b$, and
$\Gamma(z,p(z))=F_q(\varphi(z))=(\beta z+h,\,q(\beta z+h))
=(\beta z+h,\,\bar\alpha(p(z)-k))$.

\emph{Case $\varphi(z)=a\bar z+b$.} Since $\conj{q'(w)}=\bar q'(\bar w)$,
we have $|q'(a\bar z+b)|=|\bar q'(\bar az+\bar b)|$. Put $f=p'$ and
$g(z)=a\,\bar q'(\bar az+\bar b)$. As before, Lemma~\ref{lem:moduli} gives
$|a|=1$ and $p'=v\,g$ with $|v|=1$. Since $v\,g$ is the derivative of
$(va/\bar a)\,\bar q(\bar az+\bar b)$, we get
$p(z)=\alpha\,\bar q(\beta z+h)+k$ with $\alpha=va/\bar a$, $\beta=\bar a$
and $h=\bar b$. Finally
$\Gamma(z,p(z))=(a\bar z+b,\,q(a\bar z+b))$, and
$q(a\bar z+b)=\conj{\bar q(\beta z+h)}=\conj{\bar\alpha(p(z)-k)}$, which
is~(ii).
\end{proof}

\begin{proof}[Proof of Theorem~\ref{thm:A}]
(c)$\Rightarrow$(b): the two maps displayed in Theorem~\ref{thm:A} are
isometries of $\C^2$, being a unitary diagonal map followed by a
translation, possibly composed with complex conjugation. If
$p(z)=\alpha\,q(\beta z+h)+k$, the first one sends $(z,p(z))$ to
$(\beta z+h,\,q(\beta z+h))$; if $p(z)=\alpha\,\bar q(\beta z+h)+k$, the
second one sends $(z,p(z))$ to $(u,q(u))$ with $u=\conj{\beta z+h}$. In
both cases the first coordinate runs over all of $\C$, so the map sends
$G_p$ onto $G_q$.

(b)$\Rightarrow$(a) is Lemma~\ref{lem:outer-inner}, and (a)$\Rightarrow$(c)
together with the final statement is Theorem~\ref{thm:inner-rigidity}.
\end{proof}

\begin{remark}[Invariants]\label{rem:invariants}
Theorem~\ref{thm:A} shows that $\deg p$ and the modulus $|p_n|$ of the
leading coefficient are isometry invariants of $G_p$. It also shows that
every isometry of $\C^2$ fixing the origin and mapping $G_p$ onto $G_q$
(with $p(0)=q(0)=0$) is a diagonal unitary map, possibly composed with
complex conjugation.
\end{remark}

\begin{remark}[Degree at most one]\label{rem:degree-one}
If $\deg p\le 1$, say $p'=a$, then $g_p=(1+|a|^2)\,|dz|^2$ and
$z\mapsto\sqrt{1+|a|^2}\,z$ is an isometry of $(\C,g_p)$ onto the
Euclidean plane; also $G_p$ is a real affine $2$-plane. Hence any two
graphs of degree at most $1$ are inner and outer isometric. By
Theorem~\ref{thm:inner-rigidity} applied to $\Gamma^{-1}$, a graph of
degree at most $1$ is not inner isometric to a graph of degree at least
$2$. This completes the classification.
\end{remark}

\begin{remark}[The real trace does not suffice]\label{rem:trace}
When $p$ has real coefficients, one may try to restrict an outer isometry
to the planar curve $\{(x,p(x)):x\in\R\}$. This information does not
determine the answer. For $p(x)=x^2$ and $q(y)=-2y^2+y$, the map
$E(x,y)=x\,(i/\sqrt2,\,i/\sqrt2)+y\,(0,1)$ is a linear isometric
embedding of $\R^2$ into $\C^2$ which sends $(x,x^2)$ to
$(ix/\sqrt2,\,x^2+ix/\sqrt2)=(u,q(u))$ with $u=ix/\sqrt2$; by
Lemma~\ref{lem:extension} it extends to an isometry of $\C^2$ mapping the
real trace of $G_p$ into $G_q$. However, $G_p$ and $G_q$ are not
isometric, since $|p_2|=1\ne 2=|q_2|$ (Remark~\ref{rem:invariants}).
\end{remark}

\section{The real case}\label{sec:real}

\subsection{Inner isometries}

For $p\in\R[x]$ let $\sigma_p(x)=\int_0^x\sqrt{1+p'(t)^2}\,dt$. Since
$\sigma_p'\ge 1$, $\sigma_p$ is an increasing bijection of $\R$ with
$|\sigma_p(x)|\ge|x|$.

\begin{lemma}\label{lem:arc}
The map $\Phi_p=\sigma_p\circ\pi\colon G_p\to\R$ is a bijection and
$\din(u,v)=|\Phi_p(u)-\Phi_p(v)|$ for all $u,v\in G_p$.
\end{lemma}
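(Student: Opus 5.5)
Since $\pi|_{G_p}$ is a bijection onto $\R$ with inverse $F_p$, and $\sigma_p$ is an increasing bijection of $\R$ (as noted just before the statement), the composite $\Phi_p$ is a bijection. It remains to prove the distance formula. Fix $u=F_p(x_0)$ and $v=F_p(x_1)$ with, say, $x_0\le x_1$. Then $|\Phi_p(u)-\Phi_p(v)|=\sigma_p(x_1)-\sigma_p(x_0)=\int_{x_0}^{x_1}\sqrt{1+p'(t)^2}\,dt$, so I will show that $\din(u,v)$ equals this integral. I will do this with two inequalities, working through the length formula \eqref{eq:length} with $\K=\R$.

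For the upper bound, take the curve $\gamma=F_p\circ c$ with $c(t)=x_0+t(x_1-x_0)$. By \eqref{eq:length} and the substitution $s=c(t)$, its length is exactly $\int_{x_0}^{x_1}\sqrt{1+p'(s)^2}\,ds$. Hence $\din(u,v)$ is at most this integral.

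For the lower bound, let $\gamma=F_p\circ c$ be an arbitrary curve from $u$ to $v$, where $c$ is piecewise $C^1$ with $c(0)=x_0$ and $c(1)=x_1$. The key step is the pointwise inequality
\[
  \sqrt{1+p'(c(t))^2}\,|c'(t)|=|\sigma_p'(c(t))\,c'(t)|=\bigl|(\sigma_p\circ c)'(t)\bigr| .
\]
Integrating and using $\int_0^1|h'|\ge\bigl|\int_0^1h'\bigr|$ for the piecewise $C^1$ function $h=\sigma_p\circ c$, I get
\[
  L(\gamma)\ge\bigl|\sigma_p(c(1))-\sigma_p(c(0))\bigr|=\sigma_p(x_1)-\sigma_p(x_0).
\]
Taking the infimum over all such $\gamma$ gives the reverse inequality, and the formula follows.

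I expect no real obstacle here. The only point needing care is that the fundamental theorem of calculus still applies to $\sigma_p\circ c$ when $c$ is only piecewise $C^1$. This is fine: $\sigma_p$ is $C^1$, so I apply the theorem on each subinterval where $c$ is $C^1$ and add the results.
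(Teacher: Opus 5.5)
Your proposal is correct and follows essentially the same route as the paper: the straight-line parametrization for the upper bound, and the identity $\sqrt{1+p'(c)^2}\,|c'|=|(\sigma_p\circ c)'|$ integrated for the lower bound. One small wording point: the displayed ``pointwise inequality'' is in fact an equality (because $\sigma_p'>0$); the only inequality in the argument is $\int_0^1|h'|\ge\bigl|\int_0^1 h'\bigr|$.
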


\begin{proof}
Let $u=F_p(x)$, $v=F_p(y)$ with $x\le y$. The curve
$t\mapsto F_p(x+t(y-x))$ has length $\sigma_p(y)-\sigma_p(x)$. Conversely,
if $\gamma=F_p\circ c$ joins $u$ to $v$, then by~\eqref{eq:length}
\[
  L(\gamma)=\int_0^1\sigma_p'(c(t))\,|c'(t)|\,dt
  =\int_0^1\bigl|(\sigma_p\circ c)'(t)\bigr|\,dt\ge\sigma_p(y)-\sigma_p(x).
  \qedhere
\]
\end{proof}

\begin{proof}[Proof of Theorem~\ref{thm:B}\textup{(a)}]
By Lemma~\ref{lem:arc}, $\Phi_q^{-1}\circ\Phi_p$ is an inner isometry of
$G_p$ onto $G_q$.
\end{proof}

\begin{remark}
For $p(x)=x^2$ and $q=0$ one gets the inner isometry
$(x,x^2)\mapsto(\sigma_p(x),0)$, with
$\sigma_p(x)=\frac12x\sqrt{1+4x^2}+\frac14\operatorname{arcsinh}(2x)$.
More generally, the inner isometries $G_p\to G_q$ are exactly the maps
$F_p(x)\mapsto F_q\bigl(\sigma_q^{-1}(\pm\sigma_p(x)+c)\bigr)$, $c\in\R$,
while, for $\deg p\ge 2$, there are at most two outer isometries
$G_p\to G_q$, by Corollary~\ref{cor:symmetry} below. The argument applies
verbatim to graphs of arbitrary $C^1$ functions $\R\to\R$.
\end{remark}

\subsection{Outer isometries}

\begin{lemma}\label{lem:degree-real}
Let $p,q\in\R[x]$ with $G_p$ and $G_q$ outer isometric. If $\deg p\ge 2$,
then $\deg q\ge 2$.
\end{lemma}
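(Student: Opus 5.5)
By Lemma~\ref{lem:extension}, an outer isometry $\Gamma\colon G_p\to G_q$ is the restriction of an affine isometry $E(v)=Av+c$ of $\R^2$, with $A\in O(2)$, and $E(G_p)=G_q$. I will argue by contraposition: assume $\deg q\le 1$ and derive a contradiction with $\deg p\ge 2$. The point is that the two graphs would then have incompatible shapes.

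If $\deg q\le 1$, then $G_q$ is a straight line. Its image $E^{-1}(G_q)=G_p$ is then also a straight line in $\R^2$, because affine isometries send lines to lines. So $G_p$ would be contained in a line $\{(x,y): ax+by=d\}$ with $(a,b)\ne(0,0)$.

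It remains to show this is impossible when $\deg p\ge 2$. Every point of $G_p$ satisfies $ax+b\,p(x)=d$ for all $x\in\R$. The left-hand side is a real polynomial in $x$ that vanishes identically, so all its coefficients are zero. If $b\ne 0$, the coefficient of $x^{\deg p}$ is $b$ times the leading coefficient of $p$, since $\deg p\ge 2>1$; this is nonzero, a contradiction. If $b=0$, then $a\ne 0$ and the relation reads $ax=d$ for all $x$, which is absurd. Hence $G_p$ is not contained in any line, and so $\deg q\ge 2$.

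There is no real obstacle here; the only point needing care is the reduction via Lemma~\ref{lem:extension}, which turns the abstract outer isometry into an ambient affine isometry that preserves lines. An alternative I could use is a metric invariant: in a line, equality holds in the triangle inequality for any three points, one lying between the other two. For three points on $G_p$ this would force collinearity, which the same polynomial argument rules out.
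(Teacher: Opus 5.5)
Your proof is correct. It differs from the paper's only in how collinearity is carried from $G_q$ back to $G_p$. You use Lemma~\ref{lem:extension} to write $\Gamma$ as the restriction of an affine isometry $E$ of $\R^2$, and then use that $E^{-1}$ sends the line $G_q$ to a line. The paper does not use the extension lemma here. It relies on the purely metric fact that three points of the plane are collinear if and only if one of their mutual distances is the sum of the other two, which is exactly the alternative you sketch at the end.

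The paper's route is more elementary: it needs only distance preservation on triples of points. Yours is shorter once Lemma~\ref{lem:extension} is available, and it matches the reduction used immediately afterwards in the proof of Theorem~\ref{thm:B}(b). Surjectivity of $\Gamma$ is not actually needed in your argument, since $E(G_p)\subseteq G_q$ already gives $G_p\subseteq E^{-1}(G_q)$. Your last step, comparing coefficients in $ax+b\,p(x)\equiv d$, is equivalent to the paper's remark that the line cannot be vertical and so has the form $y=\alpha x+\beta$.
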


\begin{proof}
A graph $G_p$ is contained in a line if and only if $\deg p\le 1$: such a
line cannot be vertical, so it has the form $y=\alpha x+\beta$. Three
points of the plane are collinear if and only if one of their mutual
distances is the sum of the other two, a condition preserved by outer
isometries. If $\deg q\le 1$, any three points of $G_p$ would be
collinear, so $G_p$ would be contained in the line through two of its
points, which is impossible.
\end{proof}

\begin{proof}[Proof of Theorem~\ref{thm:B}\textup{(b)}]
If $p(x)=a\,q(bx+h)+k$, the isometry $(x,y)\mapsto(bx+h,\,a(y-k))$ sends
$(x,p(x))$ to $(bx+h,\,q(bx+h))$, and hence $G_p$ onto $G_q$.

Conversely, let $\Gamma\colon G_p\to G_q$ be an outer isometry. By
Lemma~\ref{lem:extension}, $\Gamma(v)=Av+c$ with $A\in O(2)$ and
$c=(c_1,c_2)$. Write $A=R_\theta\circ F^j$, where $R_\theta$ is the
rotation by $\theta$, $F(x,y)=(x,-y)$, $j\in\{0,1\}$, and put
$\varepsilon=(-1)^j$. Since $\Gamma(x,p(x))\in G_q$,
\begin{equation}\label{eq:real}
  q\bigl(s(x)\bigr)=\sin\theta\,x+\varepsilon\cos\theta\,p(x)+c_2,
  \qquad
  s(x)=\cos\theta\,x-\varepsilon\sin\theta\,p(x)+c_1 .
\end{equation}
Let $n=\deg p\ge 2$ and $m=\deg q$, so $m\ge 2$ by
Lemma~\ref{lem:degree-real}. If $\sin\theta\ne 0$, then $\deg s=n$ and
the left-hand side of~\eqref{eq:real} has degree $mn\ge 2n$, while the
right-hand side has degree at most $n$, a contradiction. Hence
$\sin\theta=0$, $b:=\cos\theta\in\{1,-1\}$, and \eqref{eq:real} becomes
$q(bx+c_1)=\varepsilon b\,p(x)+c_2$. With $a=\varepsilon b$ this gives
$p(x)=a\,q(bx+c_1)-a\,c_2$.
\end{proof}

The proof shows that every outer isometry $G_p\to G_q$, $\deg p\ge 2$, is
the restriction of a plane isometry $(x,y)\mapsto(bx+h,\,ay+k)$ with
$a,b\in\{1,-1\}$, and that $\deg p=\deg q$.

\begin{corollary}\label{cor:symmetry}
Let $p\in\R[x]$ with $n=\deg p\ge 2$, and let $\Sym(G_p)$ be the group of
isometries $\sigma$ of $\R^2$ with $\sigma(G_p)=G_p$. Then $\Sym(G_p)$ has
at most two elements. A non-trivial element, if any, is a reflection in a
vertical line when $n$ is even, and a half-turn about a point when $n$ is
odd.
\end{corollary}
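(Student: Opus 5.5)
Apply the proof of Theorem~\ref{thm:B}(b) with $q=p$. There, every outer isometry $G_p\to G_q$ with $\deg p\ge 2$ was shown to be the restriction of a plane isometry $\sigma(x,y)=(bx+h,\,ay+k)$ with $a,b\in\{1,-1\}$. Two such affine maps are equal as soon as they agree on $G_p$: the first coordinate $x$ ranges over all of $\R$, so $b$ and $h$ are determined, and since $p$ is non-constant the second coordinate $p(x)$ takes at least two values, so $a$ and $k$ are determined too. In fact $\sigma$ is even determined by its restriction to $G_p$ as an element of the ambient group, since $G_p$ is not contained in a line.

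The plan is therefore to count the admissible pairs $(a,b)$ and show each has at most one translation part. The identity $\sigma(G_p)=G_p$ means
\[
p(bx+h)=a\,p(x)+k \quad\text{for all } x.
\]
Write $p(x)=p_nx^n+p_{n-1}x^{n-1}+\dots$ with $p_n\ne0$ and compare coefficients.

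\begin{itemize}
\item Degree $n$: we get $b^n=a$, so $a$ is determined by $b$. This leaves at most two candidate pairs, one for $b=1$ and one for $b=-1$.
\item Degree $n-1$ (valid since $n\ge 2$): we get $p_n n b^{n-1}h+p_{n-1}b^{n-1}=a\,p_{n-1}$, which is linear in $h$ with nonzero coefficient $n p_n b^{n-1}$. So $h$ is uniquely determined.
\item Constant term: then $k=p(h)-a\,p(0)$ is determined as well.
\end{itemize}

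Hence for each $b\in\{1,-1\}$ there is at most one element of $\Sym(G_p)$, and $|\Sym(G_p)|\le2$.

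For $b=1$ we have $a=1$, and the degree-$(n-1)$ equation gives $h=0$; then $k=0$, so this element is the identity. A non-trivial element therefore has $b=-1$ and $a=(-1)^n$.

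\begin{itemize}
\item If $n$ is even, $a=1$ and $\sigma(x,y)=(-x+h,\,y+k)$. Here one must rule out $k\ne0$. Iterating gives $\sigma^2(x,y)=(x,\,y+2k)$, and $\sigma^2$ also preserves $G_p$. Since $(x,p(x))\mapsto(x,p(x)+2k)$ must land in $G_p$, we need $p(x)+2k=p(x)$, so $k=0$. Alternatively, the $b=1$ uniqueness already forces $\sigma^2=\mathrm{id}$. Thus $\sigma$ is the reflection in the vertical line $x=h/2$.
\item If $n$ is odd, $a=-1$ and $\sigma(x,y)=(-x+h,\,-y+k)$, which is the half-turn about $(h/2,k/2)$.
\end{itemize}

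The only delicate point is the even case with a possible vertical translation. It is handled by the $\sigma^2$ argument, or directly: for even $n$ the leading coefficients of the difference $p(-x+h)-p(x)$ cancel, so the relation $p(-x+h)-p(x)=k$ yields no contradiction by itself, and uniqueness of the $b=1$ element is what excludes $k\ne0$.
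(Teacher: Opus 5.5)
Your proof is correct and follows the paper's argument: you reduce to $\sigma(x,y)=(bx+h,\,ay+k)$, get $a=b^n$ from the leading coefficient, and use the $x^{n-1}$ coefficient to pin down the translation part. The differences are minor: you solve for $h$ and $k$ directly for each $b$, whereas the paper compares two elements with $b=-1$ through $\sigma_2^{-1}\sigma_1$; and in the even case you kill $k$ via $\sigma^2$, whereas the paper simply evaluates $p(h-x)=p(x)+k$ at $x=h/2$, so contrary to your closing remark that relation does force $k=0$ on its own.
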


\begin{proof}
Each $\sigma\in\Sym(G_p)$ has the form $\sigma(x,y)=(bx+h,\,ay+k)$, and
$\sigma(G_p)=G_p$ means $p(bx+h)=a\,p(x)+k$. Comparing leading
coefficients gives $a=b^n$. If $b=1$, then $a=1$ and $p(x+h)-p(x)=k$;
the coefficient of $x^{n-1}$ on the left is $n\,p_n\,h$, so $h=0$, then
$k=0$, and $\sigma=\mathrm{id}$. If $\sigma_1,\sigma_2$ both have $b=-1$,
then $\sigma_2^{-1}\sigma_1$ has $b=1$, so $\sigma_1=\sigma_2$. For $n$
even, $\sigma(x,y)=(h-x,\,y+k)$, and evaluating at $x=h/2$ gives $k=0$;
for $n$ odd, $\sigma(x,y)=(h-x,\,k-y)$ is the half-turn about
$(h/2,k/2)$.
\end{proof}

For instance, the graph of $x^2$ is symmetric with respect to the
$y$-axis, the graph of $x^3$ is symmetric under the half-turn about the
origin, and a direct computation shows that the graph of $x^4+x^3$ has no
non-trivial symmetries.

\begin{example}\label{ex:cubic}
Let $p(x)=x^3+x$ and $q(x)=x^3-x$. Since $i\,q(iz)=z^3+z$, the complex
graphs of $p$ and $q$ are isometric by Theorem~\ref{thm:A}. Their real
graphs are not outer isometric: in $a\,q(bx+h)+k$ the coefficient of
$x^2$ is $3a\,b^2h$, which forces $h=0$, and then the coefficient of $x$
is $-ab=-b^2=-1\ne 1$.
\end{example}

\section{Final remarks and questions}\label{sec:final}

Lemma~\ref{lem:extension} is classical; for isometries of a whole
Euclidean space it is a special case of the Mazur--Ulam
theorem~\cite{MU}, and we refer to~\cite{Berger} for Euclidean geometry
in general. We included its short proof because we need it for maps
defined only on a subset.

We end with some questions. Graphs of real polynomials in two variables
are surfaces in $\R^3$ with non-trivial inner geometry, but rigidity
fails in general: the graph of $p(x,y)=x^2$ is a parabolic cylinder, inner
isometric to the plane without being congruent to it. Which graphs of
real polynomials in two variables are isometrically rigid? What can be
said about graphs of polynomial maps $\C^n\to\C^m$, for the inner and the
outer distances? 

\section*{Declarations}

\subsection*{Disclosure of AI Usage.} The authors employed AI tools (Claude) for language editing, readability enhancements, and organization of ideas and proofs. The authors independently verified all mathematical results and take full responsibility for the accuracy and integrity of the content.

\subsection*{Data availability.} Data sharing is not applicable to this article as no datasets were generated or analyzed during the current study.

\subsection*{Acknowledgments.} The authors would like to thank Euripedes Silva and Joserlan da Silva for valuable discussions during the initial stage of this work.

\end{document}